\newtheorem{thm}{Theorem}
\newtheorem{cor}{Corollary}
\newtheorem{lem}{Lemma}
\newtheorem{rem}{Remark}
\newtheorem{prop}{Proposition}
\newtheorem*{conj}{Conjecture}
\newtheorem{prob}{Problem}
\theoremstyle{definition}
\newtheorem{defn}{Definition}[section]
\newtheorem{example}{Example}
\newenvironment{pf}[1][]{%
	\vskip 1mm
	\noindent
	\ifthenelse{\equal{#1}{}}%
	{{\slshape Proof. }}%
	{{\slshape #1.} }%
}%
{\qed\bigskip}
\newcounter{alphabet}
\def\be{\begin{equation}}
	\def\ee{\end{equation}}
\newcommand{\bee}{\begin{enumerate}}
	\newcommand{\eee}{\end{enumerate}}
\newcommand{\blem}{\begin{lem}}
	\newcommand{\elem}{\end{lem}}
\newcommand{\bthm}{\begin{thm}}
	\newcommand{\ethm}{\end{thm}}
\newcommand{\bcor}{\begin{cor}}
	\newcommand{\ecor}{\end{cor}}
\newcommand{\beg}{\begin{example}}
	\newcommand{\eeg}{\end{example}}
\newcommand{\begs}{\begin{examples}}
	\newcommand{\eegs}{\end{examples}}
\newcommand{\bdefe}{\begin{defn}}
	\newcommand{\edefe}{\end{defn}}
\newcommand{\bprob}{\begin{prob}}
	\newcommand{\eprob}{\end{prob}}
\newcommand{\bques}{\begin{ques}}
	\newcommand{\eques}{\end{ques}}
\newcommand{\bei}{\begin{itemize}}
	\newcommand{\eei}{\end{itemize}}
\newcommand{\bcon}{\begin{conj}}
	\newcommand{\econ}{\end{conj}}
\newcommand{\bcons}{\begin{conjs}}
	\newcommand{\econs}{\end{conjs}}
\newcommand{\bprop}{\begin{propo}}
	\newcommand{\eprop}{\end{propo}}
\newcommand{\br}{\begin{rem}}
	\newcommand{\er}{\end{rem}}
\newcommand{\brs}{\begin{rems}}
	\newcommand{\ers}{\end{rems}}
\newcommand{\bo}{\begin{obser}}
	\newcommand{\eo}{\end{obser}}
\newcommand{\bos}{\begin{obsers}}
	\newcommand{\eos}{\end{obsers}}
\newcommand{\bpf}{\begin{pf}}
	\newcommand{\epf}{\end{pf}}
\newcommand{\ba}{\begin{array}}
	\newcommand{\ea}{\end{array}}
\newcommand{\beq}{\begin{eqnarray}}
	\newcommand{\beqq}{\begin{eqnarray*}}
		\newcommand{\eeq}{\end{eqnarray}}
	\newcommand{\eeqq}{\end{eqnarray*}}
\newcounter{minutes}\setcounter{minutes}{\time}
\newcounter{hours}\setcounter{hours}{\time}
\begin{document}
	\title[Dual Smale's mean value conjecture, n=7]{Dual Smale's mean value conjecture for $n=7$}

	\author{Aimo Hinkkanen, Ilgiz R. Kayumov,  Diana M. Khammatova}
	
		\address{Aimo Hinkkanen,
				Department of Mathematics
				University of Illinois at Urbana-Champaign
				1409 W. Green Street
				Urbana, Illinois 61801-2975
			}
		\email{aimo@math.uiuc.edu}
	
	\address{I.R. Kayumov, Kazan Federal University, Kremlevskaya 18, 420 008 Kazan, Russia
	}
	\email{ikayumov@kpfu.ru}
	
	\address{D.M. Khammatova, Moscow Polytechnic University, Bolshaya Semyonovskaya 38, 107023 Moscow, Russia; Kazan Federal University, Kremlevskaya 18, 420 008 Kazan, Russia
	}
	\email{dianalynx@rambler.ru}

	\subjclass[2000]{Primary: 30C10}
	
	\keywords{Dual Smale's mean value conjecture, critical points, critical values of polynomials}

	\begin{abstract}
		We give an analytic proof of the dual Smale's mean value conjecture in the case $n=7$.
	\end{abstract}

	\maketitle
	\pagestyle{myheadings}
	 
	\markboth{A. Hinkkanen, I.R. Kayumov, D.M. Khammatova}{Dual Smale's mean value conjecture, n=7}
	
	\section{Introduction}
	In this paper we study a problem closely related to  Smale's mean value conjecture.
	We consider the  class of polynomials defined by
	$$
	\mathfrak{P}_n = \{z+\sum\limits_{k=2}^n c_k z^k \colon c_k \in \mathbb{C}, c_n\not= 0\} ,
	$$
where $n\geq 2$.	In other words, it is the class of all polynomials $f(z)$ of degree $n$ such that $f(0) = 0$ and $f'(0)=1$. We denote the critical points of $f$, that is, the zeros of $f'$, by $\zeta_k$ where $1\leq k\leq n-1$, and generically by $\zeta$.
	
	For each $f\in\mathfrak{P}_n$ we write
	$$
	T(f) = \min \left\{\left|\frac{f(\zeta)}{\zeta}\right|  \colon \,\,f'(\zeta)=0\right\}
	$$
	and
	$$
	S(f) = \max \left\{\left|\frac{f(\zeta)}{\zeta}\right|  \colon \,\,f'(\zeta)=0\right\}.
	$$
	In 1981, Smale \cite{s} proved the estimate $T(f)<4$ and conjectured that $\max\limits_{f\in \mathfrak{P}_n} T(f) = 1-\frac 1 n$. It is easy to check that for $f(z) = z+c z^n$, where $c\not = 0$ we obtain $T(f)=1-\frac 1 n$. Therefore, it is known that
	$$
	1-\frac{1}{n} \leq T(f) < 4.
	$$
	There exist a lot of estimates for various special cases. Some of them  can found in the papers \cite{bmn}, \cite{hk}.
	
	The main object of the current research is the conjecture known as the dual Smale's conjecture.
	
	\begin{conj}
		$\min\limits_{f\in \mathfrak{P}_n} S(f) = \frac 1 n$.
	\end{conj}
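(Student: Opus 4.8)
We prove the two inequalities $\min_{f\in\mathfrak{P}_n}S(f)\le \tfrac1n$ and $S(f)\ge\tfrac1n$ for every $f\in\mathfrak{P}_n$ separately. The upper bound is elementary: for the fully degenerate polynomials, i.e. those whose derivative has a single zero of multiplicity $n-1$, one has $f'(z)=nc_n(z-\zeta)^{n-1}$, and integrating with the normalization $f(0)=0$, $f'(0)=1$ gives $f(\zeta)/\zeta=\tfrac1n$ irrespective of $\zeta$; the representative $\zeta=-1$ is $f_0(z)=\big((1+z)^n-1\big)/n$. Thus $S(f_0)=\tfrac1n$ and $\min_{f}S(f)\le\tfrac1n$. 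The entire content of the conjecture is therefore the lower bound $S(f)\ge\tfrac1n$, which we establish here for $n=7$.

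For the lower bound, write $f(z)=zg(z)$ with $g(z)=1+c_2z+\dots+c_nz^{n-1}$, so that at a critical point $f'(\zeta_k)=g(\zeta_k)+\zeta_k g'(\zeta_k)=0$ and $\beta_k:=f(\zeta_k)/\zeta_k=g(\zeta_k)$. Let $\alpha_1,\dots,\alpha_{n-1}$ be the nonzero roots of $f$, so that $g(z)=c_n\prod_i(z-\alpha_i)$ with $|c_n|=1/\prod_i|\alpha_i|$. Then $|\beta_k|=\prod_i|\zeta_k-\alpha_i|/\prod_i|\alpha_i|$, and since the critical points are exactly the zeros of $\tfrac1z+\sum_i\tfrac1{z-\alpha_i}$, the claim $S(f)\ge\tfrac1n$ acquires the purely geometric form: among the solutions $\zeta$ of $\tfrac1\zeta+\sum_i\tfrac1{\zeta-\alpha_i}=0$ there is one with $\prod_{i=1}^{n-1}|\zeta-\alpha_i|\ge\tfrac1n\prod_{i=1}^{n-1}|\alpha_i|$.

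The first tool is an exact identity for the product of the dual critical values, obtained from a resultant computation for $f'=g+zg'$ and $g$ together with the discriminant of $g$:
\[ \Big|\prod_{k=1}^{n-1}\beta_k\Big|=\frac{1}{n^{n-1}}\cdot\frac{\prod_{1\le i<j\le n-1}|\alpha_i-\alpha_j|^2}{\prod_{i=1}^{n-1}|\alpha_i|^{\,n-2}}=:\frac{\Delta}{n^{n-1}}. \]
Because the maximum of nonnegative numbers dominates their geometric mean, $\max_k|\beta_k|\ge\big(\prod_k|\beta_k|\big)^{1/(n-1)}=\tfrac1n\,\Delta^{1/(n-1)}$, so the bound $S(f)\ge\tfrac1n$ follows immediately whenever $\Delta\ge1$. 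The entire difficulty is concentrated in the regime $\Delta<1$, which by the Vandermonde factor forces some roots $\alpha_i$ to (nearly) coincide, equivalently some critical value $\beta_k$ to be (nearly) $0$; in that regime the geometric-mean bound is worthless, since a single vanishing $\beta_k$ collapses the whole product.

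The plan for the remaining regime is to argue by coalescence and continuity: as a cluster of roots $\alpha_i$ merges, the corresponding critical point is trapped among them and its $\beta_k\to0$, while the configuration degenerates toward a problem of strictly lower effective degree, whose fully degenerate limit contributes exactly $\tfrac1n$. For $n=7$ one normalizes by the rotation $z\mapsto e^{i\theta}z$ (which preserves $\mathfrak{P}_n$ and all $|\beta_k|$), treats the $\alpha_i$ as six points, and carries out a finite case analysis according to their clustering pattern: on a well-separated sub-collection the product identity gives a usable lower bound, while on the clustered part one shows by direct estimate that making one $\beta_k$ small compels a compensating large $\beta_k$. I expect the main obstacle to be exactly this clustered regime, where no symmetric-function identity supplies a lower bound on $\max_k|\beta_k|$; making the compensation quantitative and exhausting every clustering pattern of six points is the crux and constitutes the bulk of the analytic work for $n=7$.
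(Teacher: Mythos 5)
Your proposal has a genuine gap, and it sits exactly where all the difficulty of the theorem lies. The upper bound via the fully degenerate polynomial is fine, and your product identity
$\bigl|\prod_k \beta_k\bigr| = \Delta/n^{n-1}$ with
$\Delta = \prod_{i<j}|\alpha_i-\alpha_j|^2 / \prod_i |\alpha_i|^{n-2}$
is correct (it follows from the resultant of $f'$ and $g$ together with the discriminant of $g$, as you say), so the geometric-mean argument does dispose of the regime $\Delta\geq 1$. But note that the extremal polynomial itself has $\Delta=1$: for $f_0(z)=((1+z)^n-1)/n$ all $|\beta_k|=1/n$, so equality holds in your bound there. Consequently the region $\Delta<1$ is not a thin exceptional set but an open region of configuration space adjacent to the extremals, and on it your chain of inequalities gives nothing --- a single nearly-vanishing $\beta_k$ destroys the product bound. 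For that region you offer only a plan (``coalescence and continuity,'' a ``finite case analysis according to clustering pattern,'' the claim that ``making one $\beta_k$ small compels a compensating large $\beta_k$''), and you yourself acknowledge that making this quantitative is the crux. That acknowledged crux is precisely the statement to be proved; a degeneration argument controls only limit configurations, not an open neighborhood of them uniformly, so nothing in the proposal closes the gap. As written, the proposal proves the conjecture only for polynomials with $\Delta\geq 1$.

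For comparison, the paper avoids this trap by never trying to bound the maximum over \emph{all} critical points directly. It fixes a critical point of \emph{minimal} modulus, normalizes it to $1$, and writes $f(1)=\int_0^1(1-t)\prod_{j=1}^5(1-z_jt)\,dt$ with $|z_j|\leq 1$ (the $z_j$ being reciprocals of the other critical points). A nonvanishing lemma (Proposition 1, proved by an explicit algebraic inequality $|b|>|a|$) lets the minimum modulus principle push the problem to the torus $|z_j|=1$, and there the inequality $|f(1)|\geq 1/7$ is established by exhibiting $25200\,(S^2-\tfrac1{49})$ as an explicit sum $J_1+\cdots+J_5$ of manifestly nonnegative expressions, whose common zero set also identifies the equality cases. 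The hard analytic content of the paper is exactly that sum-of-squares identity --- the counterpart of the ``clustered regime'' you left open --- and your proposal contains no substitute for it.
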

	
	This conjecture was formulated by Dubinin and Sugawa in \cite{d} and independently by Ng.
	
	Tischler \cite{t} proved this conjecture for all  polynomials such that 
	$$
	\frac{f(\zeta)}{\zeta}= {\rm Const},\qquad  f'(\zeta)=0.
	$$
	Polynomials satisfying this condition are called conservative polynomials.
	
	Dubinin and Sugawa \cite{d} proved the existence of a critical point $\zeta$ such that
	$$
	\left|\frac{f(\zeta)}{\zeta}\right|\ge\frac {1}{n4^n}.
	$$
	In 2018 Dubinin \cite{dub18} proved that for every polynomial $f\in\mathfrak{P}_n$, $n\geq 2$, and every point $z$ there exists a critical point $\zeta$ of $f$ such that 
	$$
\left|	\frac{f(z)-f(\zeta)}{z-\zeta}\right|\geq \frac1n\tan\frac{\pi}{4n}|f'(z)|.
	$$ 
	The author notes that $\tan\frac{\pi}{4n}$ in this inequality can be replaced by $\frac1{n}$.

	For small values of $n$ the conjecture can be checked directly, but even for $n=4$ the problem becomes quite complicated. 
	
	The proof for the case $n=4$ was given by Tischler in \cite{t}. It was demonstrated that there exists a  critical point $\zeta$ of the polynomial $f\in\mathfrak{P}_n$ such that
	$$
	\left|\frac{f(\zeta)}{\zeta}-\frac{1}{2}\right| \leq \frac{1}{2}-\frac{1}{n}, \quad 2 \leq n \leq 4.
	$$
	However, later Tyson \cite{tyson} showed that for $n\geq 5$ this inequality does not hold.
	
	In 2019 the authors \cite{hkk19} proved the conjecture for $n=5$ and $n=6$. Numerical experiments showed that the same approach can be used for $n=7$ as well, but the calculations seemed to be too complicated. 
	
	In the present work we prove the dual Smale's conjecture for $n=7$. Thus we obtain the following theorem.

\begin{thm} \label{th0}
Suppose that $f$ is a polynomial of degree $7$ with $f(0)=0$ and $f'(0)=1$. Then there is a point $\zeta$ such that $f'(\zeta)=0$ and $|f(\zeta)/\zeta|\geq 1/7$. 

If there is no point $\zeta$ such that $f'(\zeta)=0$ and $|f(\zeta)/\zeta|> 1/7$, then $f$ is of the form 
\begin{equation} \label{eq1}
f(z) = \frac{1}{7a} (  1 - (1-az)^7    ) 
\end{equation}
where $a\in {\mathbb C}\setminus \{0\}$. 
\end{thm}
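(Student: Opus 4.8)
My plan is to argue by contradiction and to exploit the two invariances of $S(f)$: the rotation $f(z)\mapsto e^{-i\theta}f(e^{i\theta}z)$ and the dilation $f(z)\mapsto \lambda^{-1}f(\lambda z)$, both of which fix every ratio $f(\zeta)/\zeta$ and hence leave $S(f)$ unchanged. Assuming $|f(\zeta)/\zeta|\le 1/7$ at every critical point $\zeta$, I want to show this forces $f$ into the family \eqref{eq1}, and in particular that the strict inequality $|f(\zeta)/\zeta|<1/7$ at \emph{all} critical points is impossible. After normalizing, I would write the derivative in factored form $f'(z)=\prod_{k=1}^{6}(1-z/\zeta_k)$ (using $f'(0)=1$), so that $f(z)=\int_0^z f'(t)\,dt$ and, by the substitution $t=\zeta_j s$, each critical value becomes
\begin{equation*}
\frac{f(\zeta_j)}{\zeta_j}=\int_0^1\prod_{k=1}^{6}\Bigl(1-s\,\frac{\zeta_j}{\zeta_k}\Bigr)\,ds .
\end{equation*}
This exhibits each $w_j:=f(\zeta_j)/\zeta_j$ as an explicit function of the mutual ratios $\zeta_j/\zeta_k$ of the critical points, which (together with one overall scale that the dilation invariance removes) are the genuine free parameters.

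The heart of the matter is to convert the family of pointwise bounds $|w_j|\le 1/7$ into a single global obstruction. A direct check shows that the unweighted product $\prod_j w_j$ is \emph{not} constant over $\mathfrak{P}_7$, so a naive geometric-mean argument fails; the right object is a weighted one. Following the scheme the authors used for $n=5,6$, I would aim to establish a weighted geometric-mean lower bound
\begin{equation*}
\prod_{j}\,|w_j|^{\lambda_j}\;\ge\;\Bigl(\tfrac{1}{7}\Bigr)^{\Lambda},\qquad \Lambda=\sum_j\lambda_j,
\end{equation*}
with suitable \emph{configuration-dependent} nonnegative weights $\lambda_j$ (for instance built from the partial-fraction residues of $1/f'$). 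Such a bound instantly yields $\max_j|w_j|\ge 1/7$ and contradicts the standing assumption unless equality holds throughout. The decisive and most laborious step is proving this inequality: after clearing denominators it reduces to the nonnegativity of an explicit symmetric expression in the six ratios $\zeta_j/\zeta_k$, and it is precisely the \emph{size} of this expression — not any new idea — that makes $n=7$ harder than the earlier cases. I expect this finite but large positivity verification to be the main obstacle; a fallback, should the weights be awkward to exhibit, is a compactness-plus-Lagrange-multiplier analysis showing that the symmetric configuration is the unique minimizer of $\max_j|w_j|$, though the non-smoothness of the maximum and the degeneracy of the extremal configuration make that route delicate as well.

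Once the combination is forced to its extreme value, I would read off the equality conditions. They should collapse every ratio $\zeta_j/\zeta_k$ to $1$, forcing the six critical points to coincide at a single point $1/a$, so that $f'(z)=(1-az)^{6}$. Integrating and imposing $f(0)=0$, $f'(0)=1$ then gives exactly
\begin{equation*}
f(z)=\frac{1}{7a}\bigl(1-(1-az)^{7}\bigr),
\end{equation*}
which is \eqref{eq1}; a direct computation confirms that for this polynomial the unique critical value equals $1/7$, so the bound is attained only here. This simultaneously delivers the strict inequality $|f(\zeta)/\zeta|\ge 1/7$ for some critical point in the non-extremal case and the announced characterization of the equality case.
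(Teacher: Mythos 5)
Your proposal is a plan rather than a proof: the decisive step, the weighted geometric-mean inequality $\prod_j |w_j|^{\lambda_j}\geq (1/7)^{\Lambda}$, is only conjectured, not established. You neither specify the weights $\lambda_j$ (the suggestion that they be ``built from the partial-fraction residues of $1/f'$'' is undeveloped --- those residues are complex, need not be nonnegative, and degenerate at multiple critical points) nor give any argument for the positivity of the resulting symmetric expression; you yourself flag this verification as the main obstacle, and the fallback via compactness and Lagrange multipliers is likewise not carried out. Moreover, this is not ``the scheme the authors used for $n=5,6$'': in both the earlier paper and this one the authors do not average over all critical values. They fix a critical point of \emph{minimal modulus}, normalize it to $1$, write $f(1)=\int_0^1(1-t)\prod_{j=1}^5(1-z_jt)\,dt$ with $z_j=1/\zeta_j$ in the closed unit disc, prove $f(1)\neq 0$ there (Proposition 1, via Lemma 1), apply the minimum principle in each variable $z_j$ to push the minimum of $|f(1)|$ to the torus $|z_j|=1$, and then prove the boundary inequality $|f(1)|\geq 1/7$ by exhibiting $25200(S^2-1/49)$ as an explicit sum of manifestly nonnegative terms (Theorem 2).

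Beyond the missing central inequality, your equality analysis presumes something that is false for the natural functional here. You assert that equality ``should collapse every ratio $\zeta_j/\zeta_k$ to $1$.'' But the extremal problem actually solved in the paper has \emph{three} classes of extremal configurations, not one: besides all critical points coinciding, there are the polynomials $f(z)=z\int_0^1(1-tz)(1-tqz)^5\,dt$ with $q=e^{\pm i\pi/3}$, whose critical points are $1$ and $1/q$ and for which $|f(1)/1|=1/7$ exactly. These do not belong to the family \eqref{eq1}; they are excluded from the conclusion of Theorem 1 only because a separate computation shows that at the other critical point $|f(1/q)/(1/q)|=\sqrt{43}/42>1/7$. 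Any correct derivation of the characterization in Theorem 1 must detect these configurations and then dispose of them by exactly such a computation; your argument has no mechanism for this, and its claimed equality analysis would pass over them silently, which is a concrete sign that the unproven inequality cannot have the simple equality structure you ascribe to it.
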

	
	The paper is organized as follows. In the second section we briefly explain the main concept of the proof. In the third section we derive Theorem~\ref{th0} from the technical main result, Theorem~\ref{t1}. The fourth section contains some additional facts, which are essential for the proof. The main theorem, which gives the necessary result, is proved in the fifth section. In the sixth section we give a short summary of the current research. 
	
	\section{The main idea}
	
	Our approach is based on a fairly simple strategy: choose a critical point $\zeta$ whose modulus is minimal among all critical points of the given polynomial.
	
	 To be more precise, let us formulate an auxiliary problem. 
	\begin{prob}\label{pr1}
		Let $\alpha_f = \min\limits_{f'(\zeta)=0}|\zeta|$,
		\begin{equation} \label{L1}
		\Lambda_f = \max\left\{\left|\frac{f(\zeta)}{\zeta}\right|: \quad f'(\zeta)=0, |\zeta| = \alpha_f \right\}.
		\end{equation} 
		Find
		\begin{equation} \label{L2}
		\Lambda = \min\limits_{f\in\mathfrak{P}_7}\Lambda_f 
		\end{equation}
		and find the extremal functions.		
	\end{prob}
	It is obvious that if  $\Lambda=1/7$ then the dual Smale's mean value conjecture for $n=7$ is true.

	Let $f\in \mathfrak{P}_7$. Then $f$  has exactly six critical points $\zeta_1,\dots ,\zeta_6$, counting multiplicities. We note that one can replace $f(z)$ by the function $\frac{f(az)}a$ where $a\in {\mathbb C}\setminus \{0\}$. Therefore, without loss of generality we can assume that $\zeta_6=1$ and $|\zeta_k|\geq 1$ for $1\leq k\leq 5$. 
	
	We use the integral representation
	\begin{equation} \label{ee1}
	\frac {f(w)}w=\int\limits_0^1(1-tw)\prod\limits_{j=1}^5\left(1-\frac{tw}{\zeta_j}\right)\,dt.
	\end{equation} 
	Now we write $\frac1{\zeta_j}=z_j$, $|z_j|\leq 1$, and calculate the expression at the point $w=1$:
	$$
	\frac {f(1)}1=\int\limits_0^1(1-t)(1-z_1t)(1-z_2t)(1-z_3t)(1-z_4t)(1-z_5t)\,dt.
	$$
	We will prove the following fact.
	
	\begin{prop}\label{n0}
		Suppose that $f\in\mathfrak{P}_7$, and let the points $\zeta_k:=\frac1{z_k}$, $1\leq k\leq 5$ and $z_6=1$  be the critical points of $f$, where $|z_k|\leq 1$ for $1\leq k\leq 5$. Then $f(1)\not = 0$.
	\end{prop}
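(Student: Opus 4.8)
The plan is to prove the equivalent analytic statement that the integral
\[
I \;=\; \int_0^1 (1-t)\prod_{j=1}^5 (1-z_jt)\,dt
\]
does not vanish for any choice of $z_1,\dots,z_5\in\overline{\mathbb D}$; by the integral representation (\ref{ee1}) evaluated at $w=1$ this integral is precisely $f(1)$, so $I\neq0$ is exactly the assertion. First I would record the elementary geometry of the integrand. For $t\in[0,1]$ and $|z_j|\le1$ the point $1-z_jt$ lies in the closed disk $\{w:|w-1|\le t\}$; hence $\real(1-z_jt)\ge 1-t\ge0$, $|1-z_jt|\le 1+t$, and $|\arg(1-z_jt)|\le\arcsin t$. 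These bounds force $\real I>0$ once the integration is restricted to $t\le\sin(\pi/10)$, where the total argument of the product stays below $\pi/2$. They do \emph{not}, however, control the sign of $\real\prod_{j=1}^5(1-z_jt)$ for $t$ close to $1$: the choice $z_j\equiv i$ already gives $\real I=-\tfrac16<0$, and in fact $I$ realises arguments filling almost the whole circle. Consequently no fixed rotation places the entire integrand in a half-plane, and a purely pointwise estimate of $\real(e^{i\gamma}I)$ cannot succeed.

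The decisive structural feature is that $I$ is affine in each variable $z_j$ separately. Isolating $z_5$, I would write $I=A-z_5 C$, where
\[
A=\int_0^1 (1-t)\prod_{j=1}^4(1-z_jt)\,dt,\qquad
C=\int_0^1 t(1-t)\prod_{j=1}^4(1-z_jt)\,dt .
\]
Because $|z_5|\le1$ we have $|I|\ge|A|-|C|$, so the whole Proposition follows once I establish the comparison $|A|>|C|$ for all $z_1,\dots,z_4\in\overline{\mathbb D}$. Here $A$ has the same shape as $I$ but with four free factors (the degree-$6$ analogue), while $C$ differs from $A$ only by the extra weight $t\in[0,1]$. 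Heuristically $C/A$ is a $t$-average of the complex mass $(1-t)\prod_{j=1}^4(1-z_jt)\,dt$ and should therefore have modulus strictly less than $1$; this is the quantitative expression of the fact that the integrand is concentrated near $t=0$, where it is close to the positive constant $1$. A useful sanity check is $z_1=\dots=z_4=1$, giving $A=1/6$, $C=1/42$, and $I=1/6-z_5/42$, whose minimum modulus $1/7$ at $z_5=1$ is the conjectured extremal value.

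The hard part is proving $|A|>|C|$ uniformly over the polydisc, precisely because the cancellation that can drive $\real\prod_{j=1}^4(1-z_jt)$ negative occurs in the intermediate and large-$t$ range, where the crude bound $|1-z_jt|\le1+t$ is far too lossy. I would attack it by splitting $\int_0^1$ at a cut-off $t_0$: on $[0,t_0]$ the argument bound confines the integrand to a sector of opening $<\pi$, yielding a genuine lower bound for $A$ together with the bound $t_0$ on the local contribution to $|C|/|A|$; on $[t_0,1]$ I would use that the weight $(1-t)$, coming from the normalised critical point $\zeta_6=1$, suppresses the tail, and that the constraints $|z_j|\le1$ limit how far the product can rotate. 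Should a single split prove too crude, the natural refinements are to iterate the affine reduction—comparing the successive moments $\int_0^1 t^k(1-t)\prod(1-z_jt)\,dt$ and showing their moduli decrease—or to feed in the already-established cases $n=5,6$, which supply exactly the lower bounds on $A$ that are needed. The single inequality $|A|>|C|$ is thus both the crux and the only genuinely computational point of the argument.
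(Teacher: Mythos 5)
Your reduction is exactly the one the paper uses: writing $f(1)=b-z_5a$ (your $A-z_5C$) and invoking $|f(1)|\ge |b|-|a|$, so that everything rests on the single inequality $|b|>|a|$ over the closed polydisc. But that inequality is precisely where your proposal stops being a proof: what you offer for it is a plan (split the integral at a cut-off $t_0$, sector bound on the head, ``$(1-t)$ suppresses the tail''), hedged with alternative plans in case the first fails, and none of it is carried out. Moreover, the strategy you describe in most detail cannot work in the stated form, because it decouples a lower bound for $|A|$ from an upper bound for $|C|$, and these cannot be decoupled: over the polydisc one has $\min |A|=1/6$ (attained at $z_j\equiv 1$; this is the $n=6$ dual Smale bound of \cite{hkk19}), while $\max |C|=\int_0^1 t(1-t)(1+t)^4\,dt\approx 0.96$ (attained at $z_j\equiv -1$), so no pair of separately obtained bounds, however the interval is split, can give $|A|>|C|$ globally. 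Quantitatively the split is hopeless anyway: with four factors the sector argument confines you to $t_0\le \sin(\pi/8)\approx 0.38$, where the head of $A$ is at most $\int_0^{t_0}(1-t)^5\,dt\approx 0.16$, while the modulus bound for the tail of $C$, $\int_{t_0}^1 t(1-t)(1+t)^4\,dt\approx 0.78$, dwarfs it — and the tail of $A$ can itself cancel the head. The inequality $|b|>|a|$ is true only because cancellation in $a$ is correlated with cancellation in $b$, and any successful argument must exploit that correlation rather than estimate the two integrals independently.

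The paper closes this gap with two ingredients absent from your sketch. First, a maximum-principle reduction: since $|b|\ge 1/6>0$ throughout the polydisc (the $n=6$ result of \cite{hkk19}), the quotient $a/b$ is analytic in each variable, so its supremum over the closed polydisc is attained on the torus $|z_1|=\dots=|z_4|=1$; hence it suffices to prove $|b|>|a|$ when all $|z_j|=1$. Second, on the torus the claim becomes a trigonometric-polynomial inequality in four angles, and the paper (Lemma~\ref{lem1}) proves it by an explicit positivity certificate: expanding $11025(|b|^2-|a|^2)$ in the real and imaginary parts of the $z_j$, and after the substitutions $x_k^2+y_k^2=1$ and $x_k=1-2w_k$, exhibiting the result as a positive constant plus a nonnegative combination of squares. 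Note that your fallback idea of ``feeding in the already-established cases $n=5,6$'' gestures at the right ingredient but for the wrong purpose: the $n=6$ bound is used only to guarantee that the denominator $b$ never vanishes, so that the maximum principle applies — not as a lower bound to be compared directly against $|a|$, which, as the numbers above show, could never succeed.
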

	
	Then we can use the minimum principle in the same way as in \cite{hkk19} and consider only points on the boundary, i.e., points $z_k$ such that $|z_k|=1$ for $1\leq k\leq 5$. 
	
	Therefore, to solve Problem 1 it is sufficient to prove the following theorem.
	
	\begin{thm}\label{t1}
		Let $z_1,z_2,z_3,z_4,z_5\in\mathbb C$ be complex numbers lying on the unit circle. Then
		\begin{equation}\label{source}
			S=\left|\int\limits_0^1(1-t)(1-z_1t)(1-z_2t)(1-z_3t)(1-z_4t)(1-z_5t)\,dt\right|\ge\frac 1 7.
		\end{equation}
	The equality holds if, and only if, 
	$$
	z_i=1,\,\,1\leq i\leq 5\qquad\text{or}\qquad z_i=e^{\frac{i\pi }3},\,\,1\leq i\leq 5 \qquad\text{or}\qquad z_i=e^{-\frac{i\pi }3},\,\,1\leq i\leq 5.
	$$
	\end{thm}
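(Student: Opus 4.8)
The plan is to make the functional $S$ completely explicit and then treat \eqref{source} as a constrained minimisation on the torus $\{|z_j|=1\}^5$. Writing $\sigma_k=\sigma_k(z_1,\dots,z_5)$ for the elementary symmetric functions of the five variables, term-by-term integration of $(1-t)\sum_{k=0}^5(-1)^k\sigma_k t^k$ gives
\begin{equation}\label{expand}
S=\frac12-\frac{\sigma_1}{6}+\frac{\sigma_2}{12}-\frac{\sigma_3}{20}+\frac{\sigma_4}{30}-\frac{\sigma_5}{42},
\end{equation}
since $\int_0^1 t^k(1-t)\,dt=\tfrac1{(k+1)(k+2)}$. A short computation from \eqref{expand} confirms the three asserted extremal values: $S=\frac17$ when every $z_j=1$, and $S=\frac{1-e^{\pm i\pi/3}}{7}$ (of modulus $\tfrac17$) when every $z_j=e^{\pm i\pi/3}$, using the relation $e^{2i\pi/3}=e^{i\pi/3}-1$. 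These are the targets, and the conjugation symmetry $S(\bar z_1,\dots,\bar z_5)=\overline{S(z_1,\dots,z_5)}$ explains why the last two families are interchanged by complex conjugation, so it suffices to analyse one of them.

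Next I would cut down the number of effective parameters in two independent ways. First, on the torus one has $\bar z_j=1/z_j$, whence $\overline{\sigma_k}=\sigma_{5-k}/\sigma_5$ and $|\sigma_5|=1$; substituting this into \eqref{expand} expresses $S$ through $\sigma_1,\sigma_2$ and the single phase $\sigma_5=e^{i\gamma}$, and after factoring out $e^{i\gamma/2}$ it splits $|S|^2$ into a sum of squares of a real and an imaginary part, each an explicit real-linear form in $\Re(\sigma_k e^{-i\gamma/2})$ and $\Im(\sigma_k e^{-i\gamma/2})$. Second, since $S$ is affine in each $z_j$ separately, write $S=A_j-z_jB_j$ with $A_j,B_j$ independent of $z_j$; the minimum of $|A_j-z_jB_j|$ over $|z_j|=1$ equals $\bigl||A_j|-|B_j|\bigr|$, so at a global minimiser each $z_j$ must align $z_jB_j$ with $A_j$, giving the stationarity relations $\Re(\bar S\,z_jB_j)=0$ for $1\le j\le 5$. (Proposition~\ref{n0}, i.e. $S\neq0$, both licenses this minimum-modulus reduction and lets one divide by $S$.) The aim of this step is to show that a minimiser may be taken with repeated coordinates, ideally collapsing to the diagonal $z_1=\cdots=z_5$, where \eqref{source} becomes the one-variable estimate $\bigl|\int_0^1(1-t)(1-zt)^5\,dt\bigr|\ge\frac17$ for $|z|=1$.

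On the diagonal the problem is elementary: with $z=e^{i\theta}$ the integral is an explicit trigonometric polynomial in $\theta$, and one checks by calculus that $|S|^2$ attains its only minima at $\theta\in\{0,\pm\pi/3\}$, each equal to $\tfrac1{49}$, which simultaneously pins down the equality cases. The substantive content is therefore the passage from five variables down to the diagonal.

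I expect that passage to be the main obstacle. The torus minimisation is genuinely high-dimensional and non-convex: the affine structure controls each variable singly, but the coupling means the stationarity system $\Re(\bar S\,z_jB_j)=0$ is a set of five coupled transcendental equations whose solution set one must understand completely in order to certify that no configuration outside the three listed families attains $\tfrac17$. Making the reduction to a few real parameters rigorous -- rather than merely plausible by symmetry -- and then ruling out all spurious stationary points through careful (and, for $n=7$, delicate) estimation of the real and imaginary forms produced in the first reduction is where essentially all the work lies; the equality analysis is intertwined with this, since three distinct extremal families must emerge simultaneously from the same bound.
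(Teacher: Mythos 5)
Your setup is fine as far as it goes: the expansion of the integral in elementary symmetric functions is correct, the three equality cases are verified correctly, and the observation that the integral is affine in each $z_j$ is sound (though note that the stationarity condition obtained by aligning $z_jB_j$ with $A_j$ should read $\Im(\bar S\,z_jB_j)=0$, not $\Re(\bar S\,z_jB_j)=0$). But the proposal is a plan, not a proof: the entire content of Theorem~\ref{t1} lies in the step you defer, namely showing that the minimum over the five-dimensional torus is attained only at the three listed configurations. You acknowledge this yourself. Neither the variable-by-variable minimum-modulus formula $\bigl||A_j|-|B_j|\bigr|$ nor the coupled stationarity system comes with any mechanism for excluding non-diagonal solutions; compactness gives existence of a minimizer, but nothing in your argument rules out a non-diagonal minimizer with value below $1/7$. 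So both the inequality \eqref{source} and the classification of equality cases remain unproven.

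The paper takes a completely different route precisely to avoid this obstacle. Writing $z_k=e^{i\phi_k}$, it forms $g=25200\left(S^2-\frac1{49}\right)$ and exhibits an explicit algebraic identity $g=J_1+J_2+J_3+J_4+J_5$, where each $J_k$ is an explicitly written sum of squares weighted by the manifestly non-negative quantities $d_i=1-\cos\phi_i\geq 0$ --- in effect a sum-of-squares certificate, verified by direct (computer-assisted) expansion. Non-negativity of $g$ is then immediate, with no critical-point analysis at all, and the equality discussion falls out structurally: $J_4=0$ forces $x_{ij}=y_{ij}=0$, i.e.\ all $z_i$ equal, and the remaining terms then force $d_i=0$ for all $i$ or $b_i=1-2\cos\phi_i=0$ for all $i$, which yields exactly the three families. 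To complete your approach you would need either to solve the coupled transcendental stationarity system in full --- which is precisely the part the authors describe as too complicated to handle by the earlier methods --- or to produce a positivity certificate of the paper's kind; the diagonal reduction you hope for admits no soft symmetry argument, since the problem is not convex and the symmetric group action does not by itself localize minimizers on the diagonal.
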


\section{Proof of Theorem~1}

We show how one can prove Theorem~\ref{th0} assuming the validity of Theorem~\ref{t1}. Now Theorem~\ref{t1} shows that $\Lambda$ defined by (\ref{L2}) satisfies $\Lambda\geq 1/7$.  To find all $f$ with $\Lambda_f=1/7$, for $\Lambda_f$ defined by (\ref{L1}), we may assume that $f$ is given by (\ref{ee1}). Recall that the critical points of $f$ are $1$ and $\zeta_j=1/z_j$. 

If $z_j=1$ for all $j$ with $1\leq j\leq 5$, then
\begin{equation} \label{ff}
f(z) = z \int_0^1 (1-tz)^6\, dt = \frac{1}{7}  \left(  1 - (1-z)^7   \right)   .
\end{equation}
At the only critical point $\zeta=1$ of $f$ we have $f(\zeta)/\zeta = 1/7$ so that $\Lambda_f=1/7$.  

If  $z_j=q$ for all $j$ with $1\leq j\leq 5$, where $q=e^{  i \pi /3}$  or $q=e^{ - i \pi /3}$, so that $1-q=\overline{q}$ and $(1-q)^6= 1$, then 
by (\ref{ee1}) and integration by parts, 
\begin{eqnarray*}  
f(z) & = & 
z \int_0^1 (1 - tz) (1-tqz)^5\, dt = \frac{-1}{6qz}  z \int_0^1 (1 - tz) \, d(1-tqz)^6
 \\   & = & 
 \frac{-1}{6q}  \left(  (1 - z)  (1-qz)^6 -1   +  \frac{-1}{7q} (  (1-qz)^7  -1  )    \right)
 \\   & = & 
  \frac{1}{42q^2}  \left(  (7q-1) (1 - (1-qz)^6)  + 6qz (1-qz)^6  \right) .
\end{eqnarray*}
Now the distinct critical points of $f$ are $1$ and $1/q$. When $q=e^{ \pm i \pi /3}$, we have
$$
\frac{f(1)}{1} = \frac{ (7q - 1 ) + (1-q)^7  }{42q^2}
= \frac{ 6q   }{42q^2} = \frac{  1  }{ 7 q}
$$ 
so that $|f(1)/1|=1/7$, while 
$$
\frac{f(1/q)}{1/q} = \frac{7q-1}{42q^2}  ,
$$
hence
$$
\left| \frac{f(1/q)}{1/q}  \right| = \frac{  \sqrt{ 50 - 14 {\rm Re}\, q    }  }{42}
= \frac{  \sqrt{ 43   }  }{42} \approx 0.156 > \frac{1}{7} .
$$
So when $z_j=q$ for all $j$ with $1\leq j\leq 5$, where $q=e^{  i \pi /3}$  or $q=e^{ - i \pi /3}$,  we have $\Lambda_f>1/7$. 

Hence we have $\Lambda_f = 1/7$  if, and only if, there is $a\in {\mathbb C} \setminus \{0\}$ such that $f(az)/a$ is given by the right hand side of (\ref{ff}).  This proves Theorem~\ref{th0}. 
	
	\section{Proof of the Proposition 1}
	We will need the following  lemma.
	
	\begin{lem}\label{lem1}
		Let 
		$$a  =\int\limits_0^1t(1-t)(1-z_1t)(1-z_2t)(1-z_3t)(1-z_4t)\,dt,$$
		$$b =\int\limits_0^1(1-t)(1-z_1t)(1-z_2t)(1-z_3t)(1-z_4t)\,dt,$$
		where $|z_k|=1$, $1\leq k\leq 4$.
		Then $|b|>|a|$.
	\end{lem}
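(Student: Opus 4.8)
The plan is to prove the equivalent statement $\Delta:=|b|^2-|a|^2>0$, where I abbreviate $g(t)=\prod_{k=1}^{4}(1-z_kt)$, so that $b=\int_0^1(1-t)g(t)\,dt$ and $a=\int_0^1 t(1-t)g(t)\,dt$. Expanding the two squared moduli as double integrals and symmetrizing in the two variables gives the single formula
\[
\Delta=\int_0^1\!\!\int_0^1 (1-st)(1-s)(1-t)\,\real\bigl[\overline{g(s)}\,g(t)\bigr]\,ds\,dt ,
\]
which one checks via $|b|^2-|a|^2=\real\bigl[\overline{(b+a)}(b-a)\bigr]$ with $b+a=\int_0^1(1-t^2)g\,dt$ and $b-a=\int_0^1(1-t)^2g\,dt$, the combined weight collapsing to $(1-st)(1-s)(1-t)$. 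I would first dispose of the case in which the multiset $\{z_1,\dots,z_4\}$ is invariant under conjugation: then $g(t)$ is a product of factors of the form $(1\pm t)$ and $|1-z_kt|^2$, each strictly positive on $[0,1)$, so $g>0$ there, whence $b>a>0$ and the inequality is immediate.

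For the general case I would run a coordinate-wise extremal reduction on the torus $\{|z_k|=1\}$. Writing $g=(1-z_kt)G_k$ with $G_k=\prod_{j\ne k}(1-z_jt)$, both $b$ and $a$ are affine in $z_k$, namely $b=B_k-z_kA_k$ and $a=A_k-z_k\widetilde A_k$ with $B_k=\int_0^1(1-t)G_k$, $A_k=\int_0^1 t(1-t)G_k$, $\widetilde A_k=\int_0^1 t^2(1-t)G_k$. Hence, with the remaining $z_j$ frozen, $\Delta$ is a function of $z_k=e^{i\theta_k}$ of the pure form $\Delta=P_k-2\real(z_kQ_k)$ (the $|z_k|^2$ terms cancel), with $P_k=|B_k|^2-|\widetilde A_k|^2$ and $Q_k=A_k\overline{B_k}-\widetilde A_k\overline{A_k}$. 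Its minimum over the circle is $P_k-2|Q_k|$, attained exactly when $z_kQ_k\ge0$. Since $\Delta$ is real-analytic on the compact torus, its global minimum is attained at a configuration satisfying all four alignment conditions $z_kQ_k\ge0$ simultaneously.

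The key step is then to show this global minimizer can be taken real, i.e. $z_k\in\{1,-1\}$. Here I would use that $\Delta$ is invariant under simultaneous conjugation $z_k\mapsto\overline{z_k}$ (which sends $b,a$ to $\overline b,\overline a$) together with the alignment relations: when the other coordinates are real the corresponding $Q_k$ is real, forcing $z_k=\pm1$, and one bootstraps this to all coordinates. Reduced to the finitely many sign patterns $z_k\in\{1,-1\}$, a direct evaluation of $b$ and $a$ by beta-integrals shows $\Delta$ is smallest at $z_1=\cdots=z_4=1$, where $b=\tfrac16$, $a=\tfrac1{42}$ and $\Delta=\tfrac1{36}-\tfrac1{1764}=\tfrac{4}{147}>0$, every other pattern giving a strictly larger value; this yields $\Delta\ge\tfrac4{147}>0$ and hence $|b|>|a|$.

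I expect the genuine obstacle to be precisely this reduction to real configurations: the kernel $(1-st)(1-s)(1-t)$ is \emph{not} positive-definite on $[0,1]$ (already $\int\!\int(1-st)\overline{\phi(s)}\phi(t)$ is indefinite), so positivity of $\Delta$ cannot follow from any soft argument and must exploit that all $z_k$ lie on the unit circle. Should the symmetry-plus-alignment argument prove delicate to close, the fallback is to expand $\real[\overline{g(s)}g(t)]=\prod_kA_k-\sum_{k<l}B_kB_l\prod_{m\ne k,l}A_m+\prod_kB_k$, where $A_k=1+st-(s+t)\cos\theta_k\ge(1-s)(1-t)\ge0$ and $B_k=(t-s)\sin\theta_k$, observe that the leading term integrates to something strictly positive against the nonnegative kernel, and bound the sign-indefinite cross terms via $|B_k|\le|t-s|$ and $A_k\ge(1-s)(1-t)$; controlling these cross terms uniformly is the crux of that alternative route.
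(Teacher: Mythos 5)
Your setup is correct and verifiable: the identity
$$
|b|^2-|a|^2=\int_0^1\!\!\int_0^1(1-st)(1-s)(1-t)\,\real\bigl[\overline{g(s)}\,g(t)\bigr]\,ds\,dt
$$
checks out, as does the conjugation-closed case, and the observation that with the other variables frozen one has $\Delta=P_k-2\real(z_kQ_k)$, so that a global minimizer on the torus must satisfy $z_kQ_k=|Q_k|\ge 0$ for every $k$. But the proof has a genuine gap exactly where you flag it: the claim that a global minimizer can be taken with all $z_k\in\{1,-1\}$ is never proved, and the argument you sketch for it is circular --- you deduce that $Q_k$ is real (hence $z_k=\pm1$) only under the hypothesis that the other three coordinates are already real, which is the conclusion being sought. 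Invariance of $\Delta$ under simultaneous conjugation gives only that minimizers occur in conjugate pairs, not that some minimizer is fixed by conjugation; the promised ``bootstrap'' is not supplied, and you would also need to handle the degenerate case $Q_k=0$, where the alignment condition says nothing. Without this reduction, the evaluation over the sixteen real sign patterns (which does correctly give $\Delta=4/147$ at $z_1=\cdots=z_4=1$) proves nothing about the global minimum. Your fallback route is likewise left open at its own crux (controlling the indefinite cross terms), so neither branch closes the lemma.

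For comparison, the paper makes no variational or structural reduction at all; it proves the inequality by a direct certificate. Writing $z_k=x_k+iy_k$ with $x_k^2+y_k^2=1$, it expands $11025(|b|^2-|a|^2)$ as an explicit polynomial $h$ in the $x_k,y_k$, subtracts from $h$ explicit nonnegative quantities (products like $(y_i-y_j)^2(y_k-y_l)^2$, and $(y_i-y_j)^2$ times quadratics in the $x$'s that are visibly nonnegative) so that the remaining $y$-dependence can be eliminated via $y_k^2=1-x_k^2$, substitutes $x_k=1-2w_k$ with $0\le w_k\le 1$, and exhibits the result $h_2$ as the constant $216$ plus manifestly nonnegative terms; hence $|b|^2-|a|^2\ge 216/11025>0$. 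If you want to salvage your route, the missing ingredient is an actual proof that the minimum of $\Delta$ on the torus is attained at a real configuration, e.g.\ by analyzing the alignment system $z_kQ_k\ge 0$ directly; nothing in your current text supplies it, whereas the paper's sum-of-squares computation sidesteps the issue entirely (at the price of a non-sharp constant $216/11025$ in place of your conjectured sharp value $4/147$).
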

\begin{proof}
		We represent each $z_k$ as $z_k=x_k+iy_k$, $x_k^2+y_k^2=1$. Then
		
		\begin{multline*}
			11025(|b|^2-|a|^2) =3730+1064 x_1 x_2 x_3 x_4 - 2359 (x_1+ x_2+x_3+x_4)+ 84 y_1 y_2 y_3 y_4 +\\+ 1701 (x_1 x_2 + x_1 x_3 + x_2 x_3+x_1 x_4 + x_2 x_4 + x_3 x_4) - 329( y_1 y_2 + y_1 y_3 + y_2 y_3+y_1 y_4 + y_2 y_4 + y_3 y_4) - \\ - 1316 (x_1 x_2 x_3 +x_1 x_2 x_4+x_1 x_3 x_4 + x_2 x_3 x_4) + 238 (x_3 y_1 y_2 + x_4 y_1 y_2 +x_2 y_1 y_3+\\ + x_4 y_1 y_3+x_1 y_2 y_3 + x_4 y_2 y_3  +x_2 y_1 y_4 + x_3 y_1 y_4  + x_1 y_2 y_4 + x_3 y_2 y_4+ x_1 y_3 y_4 + x_2 y_3 y_4) -\\- 182 (x_3 x_4 y_1 y_2  + x_2 x_4 y_1 y_3 + x_1 x_4 y_2 y_3  + x_2 x_3 y_1 y_4  + x_1 x_3 y_2 y_4 +  x_1 x_2 y_3 y_4) 
		\end{multline*}
		We denote the right-hand side of the resulting expression by $h$. It is evident that it will be not less than
		$$
		h_1=h - 7 ( (y_1 - y_2)^2 ( y_3 - y_4)^2 + (y_1 - y_3)^2 ( 
		y_2 - y_4)^2 + (y_1 - y_4)^2 ( y_3 - y_2)^2).
		$$
		Taking into account the equality $x_k^2+y_k^2=1$, we obtain a function that contains expressions in the form
		$$
		-(301 - 238 x_3 + 14 x_3^2 + (-238 + 182 x_3) x_4 + 14 x_4^2)y_1 y_2
		$$
		It can be easily seen that the expression in brackets is non-negative, so
		\begin{multline*}
			h_1\geq h_1 - \frac12 (301 - 238 x_3 + 14 x_3^2 - 238 x_4 + 182 x_3 x_4 + 14 x_4^2) (y_1 - y_2)^2 - \\-
			\frac12 (301 - 238 x_2 + 14 x_2^2 - 238 x_4 + 182 x_2 x_4 + 
			14 x_4^2) (y_1 - y_3)^2 - \\ -
			\frac12 (301 - 238 x_2 + 14 x_2^2 - 238 x_3 + 182 x_2 x_3 + 
			14 x_3^2) (y_1 - y_4)^2 - \\ -
			\frac12 (301 - 238 x_1 + 14 x_1^2 - 238 x_4 + 182 x_1 x_4 + 
			14 x_4^2) (y_2 - y_3)^2 - \\ -
			\frac12 (301 - 238 x_1 + 14 x_1^2 - 238 x_3 + 182 x_1 x_3 + 
			14 x_3^2) (y_2 - y_4)^2 - \\ -
			\frac12 (301 - 238 x_1 + 14 x_1^2 - 238 x_2 + 182 x_1 x_2 + 
			14 x_2^2) (y_3 - y_4)^2	
		\end{multline*}
Using the equality  $x_k^2+y_k^2=1$ one more time, we obtain an expression that depends only on $x_k$, $1\leq k\leq 4$.

We replace $x_k = 1-2w_k$, $0\leq w_k\leq 1$ and denote the resulting function by $h_2$. Now 

\begin{multline*}
	h_2 =216 + 17024 w_1 w_2 w_3 w_4 + 70 (w_1 + w_2 + w_3 + 
	w_4 + (w_1 - w_2)^2 + (w_1 - w_3)^2 + (w_1 - w_4)^2 +\\+ 
	(w_2 - w_3)^2 + (w_2 -	w_4)^2 + (w_3 - w_4)^2) + 
	112 ((w_3 + w_4) (w_2 - w_1)^2 + (w_2 + w_4) (w_1 - w_3)^2 + \\ + (w_2 + 
	w_3) (w_1 - w_4)^2 + (w_1 + w_4) (w_2 - w_3)^2 + (w_1 + 
	w_3) (w_2 - w_4)^2 + (w_2 + w_1) (w_4 - w_3)^2) + \\+
	56 ((w_1 w_2 + w_1 w_3 + w_2 w_3) ((5 w_4 - 1)^2 +  w_4^2) + (w_1 w_2 + w_1 w_4 +	w_2 w_4) ((5 w_3 - 1)^2 + w_3^2) + \\ + (w_1 w_3 + w_1 w_4 + 
	w_3 w_4) ((5 w_2 - 1)^2 +  w_2^2) + (w_2 w_3 + w_2 w_4 + 
	w_3 w_4) ((5 w_1 - 1)^2 + w_1^2)) + \\ + 14 ((4 w_1 w_2 - 1)^2 + (4 w_1 w_3 - 1)^2 + (4 w_1 w_4 - 1)^2 + (4 w_3 w_2 - 
	1)^2 + (4 w_4 w_2 - 1)^2 + (4 w_3 w_4 - 1)^2).
\end{multline*}

We can see that $h_2>0$, therefore $|b|>|a|$. The lemma is proved.

\end{proof}

Now we return to the proof of Proposition 1. It is easy to see that 
$$
f(1) = b-z_5a,
$$
where $a$ and $b$ are given by the same formulas as in Lemma~\ref{lem1}, but now we consider the case when $|z_k|\leq 1$, $1\leq k\leq 5$. 

In the paper \cite{hkk19} it was proved that $|b|>\frac16$ for all $z_k$ in the unit circle. 
Thus, $b\ne 0$ for all $z_k$. Then $\frac ab$ is an analytic function of four variables. From Lemma~\ref{lem1} we know that $ \left|\frac ab\right|<1$ when $|z_k|=1$, $1\leq k\leq 4$, so, applying the maximum principle to each variable, we see that the same inequality holds for $|z_k|\leq 1$. This is essentially the same argument that was given in \cite{hkk19} to prove a similar result there for polynomials of degree $6$. Therefore, $|b|>|a|$ and $|f(1)|>0$.

Proposition 1 is proved.

\section{Proof of the main result, Theorem~\ref{t1}}	
	Since $z_1, z_2, z_3, z_4, z_5$ lie on the unit circle, they can be represented as
	$$
	z_1=e^{i\phi_1},\,z_2=e^{i\phi_2},\,z_3=e^{i\phi_3},\,z_4=e^{i \phi_4},\,z_5=e^{i\phi_5}.
	$$
	We consider the expression 
	$$
	g(\phi_1,\phi_2,\phi_3,\phi_4,\phi_5)=25200\left(S^2-\frac1{49}\right).
	$$
	 If it is non-negative for all real values of the variables $\phi_k$ for $1\leq k\leq 5$, the desired inequality holds.

	For the sake of simplicity we write 
	$$
	\cos\phi_i-\cos\phi_j = x_{ij}, \quad \sin\phi_i-\sin\phi_j = y_{ij},\quad 1-\cos\phi_i = d_i,\quad 1-2\cos\phi_i = b_i.
	$$ 
		
We denote by $E_1$ the set of all permutations of the set $\{1,2,3,4,5\}$, that is, bijections $\varphi$ of $\{1,2,3,4,5\}$ onto itself.

We denote by $E_2$ the set of ordered pairs of unordered pairs of elements of $\{1,2,3,4,5\}$ such that all four elements of $\{1,2,3,4,5\}$ in each case are distinct. Then $E_2$ has $30$ elements, such as the set $[\{ 5,1 \}, \{ 2,4 \}]$, noting that in both pairs, as usual, the elements of a set are not in any particular order. Let us label the elements of $E_2$ as $e_1,\dots ,e_{30}$ in some definite way, and now choosing a definite order for the elements of  $\{1,2,3,4,5\}$ involved, for  each $e_j$, write $e_j$ in the form $e_j=[ \{ e_{j,1}, e_{j,2} \}, \{ e_{j,3} ,e_{j,4}  \} ]$. 

We denote by $E_3$ the set of all $10$ unordered pairs of distinct elements of $\{1,2,3,4,5\}$ and label them in some definite way as $T_j=\{ t_{j,1},t_{j,2}  \}$, choosing some definite order for the elements of each set $T_j$ , where $1\leq j\leq 10$.

We denote by $E_4$ the set of all ordered pairs of the form $(k_1,\{ k_2,k_3,k_4 \})$, where the numbers $k_1,k_2,k_3,k_4$ are distinct elements of $\{1,2,3,4,5\}$ and the triple $k_2,k_3,k_4$ is unordered, being expressed as a set. There are  $20$  elements of $E_4$ and we label them in some definite way as $U_j$ for $1\leq j\leq 20$. We write $U_j= (u_{j,1},\{ u_{j,2},u_{j,3},u_{j,4} \})$, where some (irrelevant but) definite order has been chosen in the triple. 

We define
\begin{eqnarray*} 
J_1 & = &  \frac{ 115 } { 6 }  \sum_{ \varphi \in E_1 } d_{ \varphi(1) } d_{ \varphi(2) } d_{ \varphi(3) } y_{  \varphi(4)   \varphi(5) }^2  + 
4 \sum_{j=1}^{30} x_{ e_{j,1}  e_{j,2} }^2   y_{ e_{j,3}  e_{j,4} }^2 
\\ & + & 
 \frac{ 11 } { 12 }  \sum_{ \varphi \in E_1 } ( b_{ \varphi(1) } + b_{ \varphi(2) } + b_{ \varphi(3) } )^2 d_{ \varphi(4) } d_{ \varphi(5) } 
\\ & + & 
 \frac{ 205 } { 48 }  \sum_{ \varphi \in E_1 } d_{ \varphi(1) } d_{ \varphi(2) } d_{ \varphi(3) }  ( b_{ \varphi(4) } + b_{ \varphi(5) }   )^2 
 \\ & + & 
 \frac{ 1 } { 144 }  \sum_{ \varphi \in E_1 } ( d_{ \varphi(1) } + d_{ \varphi(2) } + d_{ \varphi(3) } )  ( 19 b_{ \varphi(4) }^2  b_{ \varphi(5) }^2   +  96  y_{ \varphi(4) \varphi(5)  }^2     )     
   ,
\end{eqnarray*} 
\begin{eqnarray*} 
J_2 & = & \frac{1} { 16}  \sum_{ \varphi \in E_1 } \bigl(  2   (15 d_{ \varphi(1) } + 4 ) ( y_{  \varphi(2)  \varphi(3)  }^2 y_{  \varphi(4)   \varphi(5) }^2     ) 
\\
& + & 
\frac{3}{8}   d_{ \varphi(1) } (   b_{ \varphi(2) } + b_{ \varphi(3) } + b_{ \varphi(4) } + b_{ \varphi(5) }  )^2     + \frac{43}{12}   b_{ \varphi(1) }^2 ( d_{ \varphi(2) } + d_{ \varphi(3) } + d_{ \varphi(4) } + d_{ \varphi(5) }   )      \bigr) 
\\ & + &  
\frac{ 25 } { 4 } \sum_{j=1}^{20} b_{u_{j,1}}^2  d_{u_{j,2}} d_{u_{j,3}} d_{u_{j,4}}  ,   
\end{eqnarray*} 
$$
J_3 = \frac{1}{4} \sum_{j=1}^{30} (  b_{ e_{j,1} } b_{ e_{j,2} } - b_{ e_{j,3} } b_{ e_{j,4} }  )^2  ,  
$$
$$
J_4 = 2 \sum_{j=1}^{10} \left( 7 x_{ t_{j,1} t_{j,2} }^2 + 10 y_{ t_{j,1} t_{j,2} }^2    \right)   ,
$$
and
$$
J_5 = \frac{1} { 192 } \sum_{ \varphi \in E_1 } 
\left( 7  ( b_{ \varphi(1) }  + b_{ \varphi(2) } b_{ \varphi(3) } )^2 ( d_{ \varphi(4) }  + d_{ \varphi(5) }  )   + 720   d_{ \varphi(1) }^2 y_{ \varphi(2) \varphi(3) }^2    ( d_{ \varphi(4) }  + d_{ \varphi(5) }  )    \right) 
$$

Since $d_i \geq 0$ for all $i$, it is clear that $J_k\geq0$ for all $k$ with $k=1,\dots,5$. It can be  checked that
\begin{equation} \label{id1}
g(\phi_1,\phi_2,\phi_3,\phi_4,\phi_5)=J_1+J_2+J_3+J_4+J_5  .
\end{equation} 
Then $g\geq 0$, so $S^2\geq\frac1{49}$, which is equivalent to \eqref{source}. 

Moreover, the equality $g=0$ will hold only if each term $J_k$ is equal to zero. From the equality $J_4=0$ one can see that $x_{ij}=y_{ij}=0$ for all $1\leq i,j\leq 5$. This means that all the points $\phi_i$ are equal. Now it is evident that in this case  $d_i=0$ for all $i$ or $b_i=0$ for all $i$. This gives three points of minimum: 
$$
\phi_i=0,\quad i=1,\dots,5 ,  \qquad\phi_i=\frac{\pi}3,\quad i=1,\dots,5 , \qquad\text{and}\qquad\phi_i=-\frac{\pi}3,\quad i=1,\dots,5.
$$
In each of these cases, $g=0$, so $S=1/7$. This proves Theorem~\ref{t1}.

\begin{rem}
	Comparing this result with the one from the paper~\cite{hkk19}, one can see that the case $n=7$ is more complicated. In particular, for the counterpart of Problem~\ref{pr1} for $n=6$, which was implicitly proved in \cite{hkk19} as well, there exists only one class of extremal functions, while for $n=7$ we have three. 
	
	For $n=6$ all the extremal functions  were described by
	$$
	f(z) := \frac1{6a}(1-(1-az)^6),\qquad a\in\mathbb{C}\backslash \{0\}.
	$$
	For $n=7$ we get
	\begin{itemize}
		\item The function
		$$
		\tilde{g}_1(z) = z\int\limits_0^1(1-tz)^6\,dt
		$$
		and all the functions in the form $g_1(z) = \frac{\tilde{g}_1(az)}a$:
		$$
		g_1(z) := \frac1{7a}(1-(1-az)^7),\qquad a\in\mathbb{C}\backslash \{0\}.
		$$
		The equality $\left|\frac{f(z)}z\right| = \frac17$ will be attained  at the critical point $z=\frac1a$.  
		
		\item The functions
		$$
		\tilde{g}_{2,3}(z)=z\int\limits_0^1(1-tz)(1-tqz)^5\,dt,\qquad q=e^{\pm\frac{i\pi}3}
		$$
		and all the functions of the form $g_{2,3}(z) = \frac{\tilde{g}_{2,3}(az)}a$:
		$$
		g_{2,3}(z) :=\frac 1{42 q^2 a}((7q-1) (1 - (1 - a q  z)^6) +6 q a z (1 - a q z)^6),\qquad a\in\mathbb{C}\backslash \{0\}. 
		$$
		
		 As we have seen, the functions $g_{2,3}(z)$ are not extremal for the initial dual Smale's problem. However, they are extremal for Problem~\ref{pr1}.
		 
	\end{itemize}

\end{rem}

\section{Conclusion}
In this paper we continue the research on the dual Smale's mean value conjecture. The analytical proof of the  case $n=7$ is given. The common concept of the proof is the same as in the paper~\cite{hkk19}, but the methods are different.

The approach we use is based on  solving one auxiliary problem, denoted as Problem~\ref{pr1}. It turns out that for $n=7$ this problem has exactly three classes of extremal functions, which makes it different from all previous cases. 

We already know from \cite{hkk19} that for $n>7$ the same approach of choosing the arbitrary point with the minimal absolute value does not work. It means that for further investigations the method should be modified.

\section{Acknowledgments}
The work of D.M.\ Khammatova is supported by Russian Science Foundation, grant No.\ 22-71-10094.

\end{document}